\documentclass{amsart}
\usepackage{amsmath,amssymb,amsthm,mathtools}
\usepackage{booktabs}
\usepackage{hyperref}

\newtheorem{theorem}{Theorem}[section]
\newtheorem{lemma}[theorem]{Lemma}
\newtheorem{proposition}[theorem]{Proposition}
\newtheorem{corollary}[theorem]{Corollary}
\numberwithin{equation}{section}

\newcommand{\F}{\mathbb F}
\newcommand{\one}{\mathbf 1}
\newcommand{\cO}{\mathcal O}
\newcommand{\cJ}{\mathcal J}
\newcommand{\cK}{\mathcal K}
\newcommand{\sfT}{\mathsf T}
\newcommand{\wideand}{ \quad \text{ and } \quad }

\DeclareMathOperator{\rank}{rank}
\DeclareMathOperator{\AG}{AG}

\title{Complement minimally non-totally unimodular matrices}

\author{Suyoung Choi}
\address{Department of Mathematics, Ajou University, Suwon 16499, Republic of Korea}
\email{schoi@ajou.ac.kr}

\author{Mathieu Vall\'ee}
\address{Universit\'e libre de Bruxelles, CP212, Boulevard du Triomphe, 1050 Brussels, Belgium}
\email{mathieu.vallee@protonmail.com}
\date{\today}

\thanks{
    The first author was supported by the National Research Foundation of Korea Grant funded by the Korean Government (RS-2025-00521982). 
    The second author was supported by the Fonds de la Recherche Scientifique-FNRS under Grant n°T003325F.
}
\subjclass[2020]{Primary 05B20; Secondary 05B35, 90C27}

\keywords{Totally unimodular matrix, totally equimodular matrix, matroid, excluded minor}
\dedicatory{In memory of Professor Klaus Truemper}

\begin{document}
\begin{abstract}
    We prove that, up to row and column permutations and complement operations, the only complement minimally non-totally unimodular matrices are the cycle matrices $C_3$ and $C_5$. 
    This settles a conjecture of Chervet, Grappe, and Vall\'ee. 
    As a consequence, every simplicial cone generated by the rows of a totally equimodular matrix admits a regular unimodular Hilbert triangulation.
\end{abstract}
\maketitle

\section{Introduction}
Totally unimodular matrices are a classical source of integral polyhedra in combinatorial optimization~\cite{Schrijver_1999, Conforti-Cornuejols-Zambelli2014book}.
The matroids represented by totally unimodular matrices are precisely the regular matroids~\cite{Tutte1958}, whose structure is described by Seymour's decomposition theorem~\cite{Seymour1980}.
Totally equimodular matrices generalize the class of totally unimodular matrices and are closely related to box-totally dual integral systems and box-TDI polyhedra~\cite{Chervet-Grappe-Robert2021,Lancini-Pisanu2026}.
Equimodular matrices appear in the study of box-perfect graphs~\cite{Chervet_Grappe_2024}.
Chervet, Grappe, and Vall\'ee proved a decomposition theorem for totally equimodular matrices of full row rank~\cite{Chervet_Grappe_Vallee_2026}.
Their decomposition contains two exceptional types of fundamental blocks, called \emph{thin} and \emph{thick te-interlaces}.
Through their \emph{cores}, these blocks correspond to complement totally unimodular matrices and complement minimally non-totally unimodular matrices, respectively.
Thus, classifying complement minimally non-totally unimodular matrices gives a complete description of the thick exceptional blocks.

Complement totally unimodular and complement minimally non-totally unimodular matrices were introduced by Truemper~\cite{Truemper1980}.
He gave necessary conditions for a matrix to be complement minimally non-totally unimodular, but left the classification open.
In his monograph~\cite{Truemper1992book}, he later gave a constructive characterization of complement totally unimodular matrices.
Truemper expected complement minimally non-totally unimodular matrices to occur only in small orders.\footnote{Private communication of the second author with K.~Truemper in 2024.}
Later, it was conjectured in~\cite[Conjecture~3.14]{Chervet_Grappe_Vallee_2026} that the only possible orders are $3$ and $5$.
We prove this conjecture.

\begin{theorem}\label{thm:main}
Up to row and column permutations and complement operations, the only complement minimally non-totally unimodular matrices are
$$
    C_3=\begin{bmatrix}
        1 & 1 & 0\\
        0& 1 & 1\\
        1&0&1
    \end{bmatrix} \wideand
    C_5=\begin{bmatrix}
        1&1&0&0&0\\
        0&1&1&0&0\\
        0&0&1&1&0\\
        0&0&0&1&1\\
        1&0&0&0&1
    \end{bmatrix}.
$$
\end{theorem}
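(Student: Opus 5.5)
The plan is to translate the problem into statements about a single bordered matrix and then use Camion's characterization of totally unimodular $\{0,1\}$-matrices to bound the size.

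Throughout, \emph{complement operations} are Truemper's row and column complementations, and a matrix is complement minimally non-totally unimodular if it is not totally unimodular while every proper submatrix of every matrix in its complement class is totally unimodular. I would argue within this class as follows.

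\textbf{Step 1: bordered matrix.} For a $\{0,1\}$-matrix $A$, form
$$
    B(A)=\begin{bmatrix} 0 & \one^\sfT\\ \one & A\end{bmatrix}.
$$
The point is that the matrices in the complement class of $A$ are exactly the matrices obtained from $B(A)$ by a (binary) pivot on a border entry followed by deleting the border. Hence the complement class of $A$ is an invariant of $B(A)$ together with its distinguished border row and column. Verifying this identification against Truemper's definition is the first thing to check.

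\textbf{Step 2: Camion's criterion.} A $\{0,1\}$-matrix is totally unimodular iff it has no square Eulerian submatrix (all row and column sums even) whose total sum is $\equiv 2 \pmod 4$. Let $A$ be complement minimally non-totally unimodular. Then $A$ itself, and every complement of it that fails total unimodularity, is square and Eulerian with sum $\equiv 2 \pmod 4$. Moreover every proper submatrix of every complement is totally unimodular, so each such complement is a minimal violator in Camion's sense.

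Minimal violators have a rigid structure. Their bipartite graph is an even cycle $C_{2k}$ with $k$ odd, up to binary pivoting, and every pivot on a $1$ gives again a matrix whose proper submatrices are totally unimodular. Combining this with Step 1, I would show:

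\begin{itemize}
\item row and column sums of $A$ are all equal to $2$ modulo pivots;
\item the complementation at each row and each column must itself either be totally unimodular or again be such a minimal violator.
\end{itemize}

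Applying the sum condition to $A$ and to its complement at a row $i$ constrains the number of $1$'s in row $i$ relative to $n$. The aim is to force $A$ to be a cycle matrix $C_n$ with $n$ odd, up to complements and permutations.

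\textbf{Step 3: bound the order.} This is the step I expect to be the main obstacle. Here I would use Truemper's constructive characterization of complement totally unimodular matrices~\cite{Truemper1992book}. Every proper submatrix of $A$, say $A$ minus a row, lies in that class, and the characterization gives a bounded list of building blocks. Deleting a row of $C_n$ yields a path matrix, which is complement totally unimodular only for small $n$.

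To get the bound concretely, for $n\ge 7$ I would exhibit a complement of $C_n$ minus one row containing a Camion violator, for instance a $3\times 3$ submatrix equal to $C_3$. This uses the fact that complementing at a row of weight $2$ in a long path creates a triangle pattern. That leaves $n\in\{3,5\}$.

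\textbf{Step 4: finite check.} The remaining work is a finite verification that $C_3$ and $C_5$ do qualify. One checks that their complement classes consist of matrices whose proper submatrices are all totally unimodular; for $C_5$ this uses its self-complementary symmetry. One also checks that any other candidate in orders $3$ and $5$ is equivalent to one of these two under complements and permutations.
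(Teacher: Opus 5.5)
\textbf{There is a genuine gap, and it is in Step~2.} You claim that minimal violators are odd cycle matrices ``up to binary pivoting'', and hence that $A$ has all row and column sums $2$ modulo pivots. This is asserted without argument, and in the stated generality it is false. For example,
\[
\begin{bmatrix}1&1&1&1\\1&1&0&0\\1&0&1&0\\1&0&0&1\end{bmatrix}
\]
has determinant $-2$ and contains no $3\times3$ copy of $C_3$. Its proper square submatrices therefore all have determinant in $\{0,\pm1\}$, so it is minimally non-totally unimodular. It has order $4$, however, and binary pivoting preserves the size of a matrix, so it can never become $C_k$ with $k$ odd. Even inside $\cO(C_5)$ there are minimally non-totally unimodular matrices with rows of weight~$4$.

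Restricted to complement minimally non-totally unimodular matrices, the statement ``some matrix in $\cO(A)$ is a cycle matrix'' is essentially the theorem itself. Your sentence about the sum condition constraining row weights supplies no mechanism for it. In the paper this step comes from $Bu=2\one$, where $u=2B^{-1}\one\in\{\pm1\}^m$ and $\one^\sfT u\in\{3,5\}$; both facts hold because $4H(B)^{-1}$ is again a thick te-interlace. Then $u$ has $(m-\one^\sfT u)/2$ negative entries. A single column complement produces all row weights $2$ only when there is at most one negative entry, and that is guaranteed only for $m\le5$. So in the paper the cycle structure is a \emph{consequence} of the size bound, not a route to it. Your Steps~2 and~3 are in the reverse order.

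Consequently Step~3 only treats $A=C_n$, which is the easy part. Even there the mechanism you cite is wrong: for $n\ge7$ no single row or column complement of $C_n$ contains a copy of $C_3$. Two complements do suffice. Take the row-$1$ complement followed by the column-$3$ complement. Rows $2,4,5$ become $\{2,\dots,n\}$, $\{1,2,4,5\}$ and $\{1,2,5,6\}$, and their restriction to columns $1,4,6$ is a permuted $C_3$.

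More importantly, this gives no bound for a general complement minimally non-totally unimodular matrix. Truemper's characterization of complement totally unimodular matrices cannot supply one on its own, since that class contains matrices of every size.

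The missing idea is a size bound valid for every such $B$. The paper takes your bordered matrix from Step~1, which is exactly its $D(B)$, and uses it to build a matroid rather than to drive pivoting arguments:
\begin{enumerate}
\item The binary matroid $N_B$ represented by $[I\mid D(B)]$ has complementary circuit-hyperplanes $\cJ$ and $\cK$.
\item Relaxing $\cJ$ gives $M_B$, which is not binary, and is not ternary by Truemper's almost-representation theorem.
\item Every one-element minor of $M_B$ is binary or ternary, because proper submatrices of matrices in $\cO(B)$ are complement totally unimodular.
\item So $M_B$ is an excluded minor on $2(m+1)$ elements. The Mayhew--Oporowski--Oxley--Whittle list then forces $M_B$ to be $\AG(3,2)'$ or $T_{12}'$, hence $m\in\{3,5\}$.
\end{enumerate}
Only after this does the signed-degree argument produce $C_3$ and $C_5$.
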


The core correspondence gives the following corollary.
The relevant definitions are given in Section~\ref{sec:cores}.

\begin{corollary}\label{cor:te-interlace}
Every full-dimensional thick te-interlace is equivalent to exactly one of
$$
    H_4= \begin{bmatrix}
        1&1&1&1\\
        1&-1&-1&1\\
        1&1&-1&-1\\
        1&-1&1&-1
    \end{bmatrix} \wideand
    H_6= \begin{bmatrix}
        1&1&1&1&1&1\\
        1&-1&-1&1&1&1\\
        1&1&-1&-1&1&1\\
        1&1&1&-1&-1&1\\
        1&1&1&1&-1&-1\\
        1&-1&1&1&1&-1
    \end{bmatrix}.
$$
In particular, no full-dimensional thick te-interlace has size at least~$8$.
\end{corollary}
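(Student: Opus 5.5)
The plan is to combine Theorem~\ref{thm:main} with the core correspondence of Section~\ref{sec:cores}, taken from~\cite{Chervet_Grappe_Vallee_2026}. I will use that correspondence in the following form. Every full-dimensional thick te-interlace $T$ of size $n+1$ can be brought, by the equivalence operations (row and column permutations and negations), to a normalized form whose first row and first column are all ones. Its remaining $n\times n$ block $B$ then has entries in $\{\pm1\}$. The core is the $0/1$ matrix $M=\tfrac12(J-B)$, where $J$ is the all-ones matrix, so that $M$ records the $-1$ entries.

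The two facts I need are these. First, $T$ is a thick te-interlace exactly when its core is complement minimally non-totally unimodular. Second, changing the choice of normalization, that is, which row and column are made all ones, changes the core by a complement operation together with permutations. Conversely, every complement operation and permutation of the core is realized by an equivalence of $T$. So equivalence classes of full-dimensional thick te-interlaces of size $n+1$ are in bijection with classes of complement minimally non-totally unimodular $n\times n$ matrices up to permutations and complements.

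With this in hand, the argument runs in four steps.
\begin{enumerate}
\item Take an arbitrary full-dimensional thick te-interlace $T$ and normalize it. Its core $M$ is then complement minimally non-totally unimodular.
\item By Theorem~\ref{thm:main}, $M$ can be transformed into $C_3$ or $C_5$ by permutations and complement operations. Lifting these operations back through the correspondence, $T$ is equivalent to the normalized te-interlace whose core is exactly $C_3$ or $C_5$.
\item Compute these two matrices directly. For $H_4$, the lower-right $3\times3$ block has rows $(-1,-1,1)$, $(1,-1,-1)$, $(-1,1,-1)$. Marking the $-1$ entries gives the rows $(1,1,0)$, $(0,1,1)$, $(1,0,1)$, which is $C_3$. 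The same reading of the lower-right $5\times5$ block of $H_6$ gives $C_5$.
\item The word ``exactly one'' follows because $H_4$ and $H_6$ have different sizes, and equivalence preserves size. The final claim follows because the size of $T$ is one more than the order of its core, and that order is $3$ or $5$. Hence the size is $4$ or $6$, never at least $8$.
\end{enumerate}

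The only delicate point is the second step. There I must check that every complement operation on the core is realized by an equivalence of the te-interlace, and not merely that equivalent te-interlaces have complement-related cores. Both directions are needed to conclude that $T$ is equivalent to $H_4$ or $H_6$, rather than only that its core is related to $C_3$ or $C_5$.

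I would verify this by writing down the complement operation with respect to a row $i$ as an explicit procedure. First negate row $i+1$ of the normalized $T$. Then negate the columns on which it is now $-1$, so that the all-ones row moves to position $i+1$. Finally re-permute. Reading off the new core by the rule $M=\tfrac12(J-B)$ should give exactly the complemented matrix; the column version is symmetric. This is the content I expect Section~\ref{sec:cores} to supply, and I would cite it there rather than reprove it.
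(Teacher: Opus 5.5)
Your proposal is correct and follows the paper's proof. Proposition~\ref{prop:core} makes the core complement minimally non-totally unimodular, Theorem~\ref{thm:main} reduces it to $C_3$ or $C_5$, Lemma~\ref{lem:core-equivalence} lifts those operations to an equivalence with $H_4$ or $H_6$, and ``exactly one'' and the size bound follow from the sizes $4$ and $6$.

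One small slip in your sketch of the row complement: negating row~$i+1$ first is unnecessary. It forces you to negate column~$0$, so you would then have to negate every other row to make the first column all ones again. The paper instead swaps the first row with row~$i+1$ and resigns the columns, and the first column stays all ones automatically.
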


The classification also has a geometric consequence.
Combining Corollary~\ref{cor:te-interlace} with \cite[Corollary~3.4 and Theorem~4.5]{Chervet_Grappe_Vallee_2026} gives the following corollary.

\begin{corollary}\label{cor:triangulation}
    Every te-cone, that is, every simplicial cone generated by the rows of a totally equimodular matrix, admits a regular unimodular Hilbert triangulation.
\end{corollary}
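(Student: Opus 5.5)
The plan is to derive the statement directly from Corollary~\ref{cor:te-interlace} together with \cite[Corollary~3.4 and Theorem~4.5]{Chervet_Grappe_Vallee_2026}, without touching the geometry of the cones beyond what those results already encode. Let $A$ be a totally equimodular matrix whose rows generate a simplicial cone $K$. Since $K$ is simplicial, the rows of $A$ generating it are linearly independent, so we may restrict to the full row rank case. This is exactly the setting of the decomposition theorem of Chervet, Grappe, and Vall\'ee.

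\textbf{Step 1 (reduction to blocks).} We would invoke \cite[Corollary~3.4]{Chervet_Grappe_Vallee_2026} to write $K$, up to unimodular transformation, in terms of the fundamental blocks of the decomposition of $A$. These blocks are totally unimodular pieces, thin te-interlaces, and thick te-interlaces. The point to check is that a regular unimodular Hilbert triangulation of each block cone lifts to one of $K$. I expect this to be what the cited corollary provides, since the decomposition operations are compatible with products or joins of cones and such triangulations behave well under these operations.

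\textbf{Step 2 (the easy blocks).} Cones generated by rows of totally unimodular matrices are unimodular simplicial cones, so they are their own regular unimodular Hilbert triangulation. For thin te-interlaces, whose cores are complement totally unimodular, we would rely on the part of \cite[Theorem~4.5]{Chervet_Grappe_Vallee_2026} that treats them.

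\textbf{Step 3 (thick blocks, the main step).} By Corollary~\ref{cor:te-interlace}, every full-dimensional thick te-interlace is equivalent to $H_4$ or $H_6$. Previously, \cite[Theorem~4.5]{Chervet_Grappe_Vallee_2026} established the triangulation property only for thick te-interlaces of small size, or conditionally on Conjecture~3.14 there. Corollary~\ref{cor:te-interlace} now rules out sizes at least~$8$, so this hypothesis is discharged. For completeness, we would confirm that the cones of $H_4$ and $H_6$ admit regular unimodular Hilbert triangulations, either by the explicit constructions in the cited theorem or by a direct check:
\begin{itemize}
\item compute the Hilbert basis, obtained by adding the appropriate half-sums of rows;
\item exhibit a regular (for instance, pulling) triangulation into unimodular simplices.
\end{itemize}

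The main obstacle is bookkeeping: verifying that the hypotheses of \cite[Theorem~4.5]{Chervet_Grappe_Vallee_2026} are exactly \emph{every thick block is equivalent to $H_4$ or $H_6$}, in the same notion of equivalence as in Corollary~\ref{cor:te-interlace}. We also need that passing to full-dimensional blocks loses no cases. Non-full-dimensional thick blocks should reduce to full-dimensional ones after projecting to their linear span, which is again handled by \cite[Corollary~3.4]{Chervet_Grappe_Vallee_2026}.
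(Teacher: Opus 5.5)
Your argument takes the same route as the paper. \cite[Theorem~4.5]{Chervet_Grappe_Vallee_2026} already gives a regular unimodular Hilbert triangulation of every te-cone, provided no thick te-interlace has size greater than six. Corollary~\ref{cor:te-interlace} is what removes that proviso. So Steps~1 and~2, and the direct check on the cones of $H_4$ and $H_6$, are not needed. The block decomposition and the lifting of triangulations are internal to that theorem; you only have to supply the size bound.

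There is one step that needs an input beyond Corollary~\ref{cor:te-interlace}, and you leave it vague: passing from full-dimensional to arbitrary thick te-interlaces.

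\begin{itemize}
\item The hypothesis of the cited theorem is a bound on the size of every thick te-interlace, full-dimensional or not. It is not ``every thick block is equivalent to $H_4$ or $H_6$''; that formulation only makes sense for the full-dimensional ones.
\item ``Projecting to the linear span'' only makes sense here as restricting to $n$ linearly independent columns. After that you still need to know that this $n\times n$ submatrix is itself a full-dimensional thick te-interlace. That is a real fact, not bookkeeping.
\end{itemize}

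The paper's proof cites exactly this fact, \cite[Corollary~3.34]{Chervet_Grappe_Vallee_2026}: every thick te-interlace contains an invertible square submatrix with the same number of rows, and that submatrix is a full-dimensional thick te-interlace. By Corollary~\ref{cor:te-interlace} that submatrix is equivalent to $H_4$ or $H_6$. Hence every thick te-interlace has at most six rows, which is precisely the hypothesis needed.

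In the paper, this extra citation is used only for the square-submatrix fact, not for the block-by-block lifting you guessed in Step~1. With that statement replacing your projection remark, the proof is complete.
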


The proof of Theorem~\ref{thm:main} uses matroid theory.
Let $B$ be a complement minimally non-totally unimodular matrix.
We construct a binary matroid $N_B$ represented over~$\F_2$ by a matrix of the form $[I\mid D(B)]$.
This construction is closely related to the matrices used in Truemper's almost-representation theorem~\cite[Construction~12.3.7]{Truemper1992book}.
The matroid $N_B$ has two complementary circuit-hyperplanes.
Relaxing one of them gives a matroid~$M_B$.

We show that $M_B$ is neither binary nor ternary in Section~\ref{sec:not-binary-ternary}, while every proper minor is binary or ternary in Section~\ref{sec:minors}.
Thus $M_B$ is an excluded minor for the class of matroids that are binary or ternary.
Using the excluded-minor classification in~\cite{Mayhew-Oporowski-Oxley-Whittle2011}, we show that $m\in \{3,5\}$ and identify the corresponding excluded minors.
Finally, we classify in Section~\ref{sec:proof} the two remaining cases using inverses and determinants.

\section{Thick te-interlaces and their cores} \label{sec:cores}

We use the terminology in~\cite{Chervet_Grappe_Vallee_2026}.
Two matrices are called \emph{equivalent} if one can be obtained from the other by permuting rows and columns and by multiplying rows and columns by~$-1$.
A \emph{full-dimensional thick te-interlace} of size~$n$ is an invertible matrix $H\in\{\pm1\}^{n\times n}$ such that
$$
    |\det H|=2^n
$$
and every nonzero proper $k\times k$ minor has absolute value $2^{k-1}$.

Let $H$ be a full-dimensional thick te-interlace.
After resigning rows and columns, we may suppose that the first row and the first column are all ones.
Then
\begin{equation}\label{eq:block-H}
    H=H(B) \coloneq 
        \begin{bmatrix}
            1 & \one^{\sfT} \\ 
            \one & \one\one^{\sfT}-2B
        \end{bmatrix},
\end{equation}
where $B\in\{0,1\}^{m\times m}$ and $m=n-1$.
The matrix~$B$ is called the \emph{core} of~$H$.

A real matrix is \emph{totally unimodular} if every square subdeterminant is $0$, $1$, or $-1$.
A $0,1$ matrix is \emph{minimally non-totally unimodular} if it is not totally unimodular but every proper submatrix is totally unimodular.
For a $0,1$ matrix~$B$, a \emph{row-$i$ complement} keeps row~$i$ and adds row~$i$ modulo two to every other row.
Column complements are defined in the same fashion.
The set of matrices obtained by sequences of row and column complements is the \emph{complement orbit} $\cO(B)$.
The matrix~$B$ is \emph{complement minimally non-totally unimodular} if every matrix in~$\cO(B)$ is minimally non-totally unimodular.

We use the following core correspondence.

\begin{proposition}[{\cite[Corollary~3.35 and Theorem~3.26]{Chervet_Grappe_Vallee_2026}}]\label{prop:core}
    The matrix $H(B)$ in~\eqref{eq:block-H} is a full-dimensional thick te-interlace if and only if $B$ is complement minimally non-totally unimodular.
    Moreover, the order~$m$ of~$B$ is odd.
\end{proposition}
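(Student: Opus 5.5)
The plan is to translate every minor of $H=H(B)$ into a determinant of a submatrix of some matrix in the complement orbit $\cO(B)$. Then both the characterization and the parity of $m$ follow from classical facts about minimally non-totally unimodular matrices.

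\emph{Step 1 (minors through the first row and column).} Subtracting the first row of $H(B)$ from each other row gives $\begin{bmatrix}1&\one^{\sfT}\\0&-2B\end{bmatrix}$. Hence $|\det H|=2^m|\det B|$. The same computation applies to any $k\times k$ submatrix of $H$ that contains the first row and the first column: its determinant is $\pm 2^{k-1}\det B'$, where $B'$ is the corresponding $(k-1)\times(k-1)$ submatrix of $B$.

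\emph{Step 2 (reduction of an arbitrary minor).} Take any $k\times k$ submatrix of $H$ that uses row $i$ and column $j$. Resign rows and columns of $H$ so that row $i$ and column $j$ become all ones, then permute them to the first position. I would check that the resulting matrix is again of the form $H(B'')$, where $B''$ is obtained from $B$ by complement operations. Concretely, making row $i$ all ones flips, in each column, the entries according to row $i$, which is exactly a row complement on the core; symmetrically for columns. Therefore every $k\times k$ minor of $H$ equals $\pm2^{k-1}\det B'$ for some $(k-1)\times(k-1)$ submatrix $B'$ of some $B''\in\cO(B)$.

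The following two correspondences also need to be checked:
\begin{itemize}
\item Every submatrix of $B''$ arises from a minor of $H$ in this way.
\item Proper minors of $H$ correspond exactly to proper submatrices of members of the orbit.
\end{itemize}
If $k\le m$, then $k-1\le m-1$, so the submatrix $B'$ is proper; conversely, every proper submatrix of $B''$ arises from some proper minor. Resigning does not change $|\det H|$, so $|\det H|=2^n$ is equivalent to $|\det B''|=2$ for every $B''\in\cO(B)$.

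\emph{Step 3 (the equivalence).} Combining Steps 1 and 2, $H$ is a full-dimensional thick te-interlace if and only if every $B''\in\cO(B)$ satisfies two conditions: all proper square submatrices have determinant in $\{0,\pm1\}$, and $|\det B''|=2$. If these hold, each $B''$ is not totally unimodular but all its proper submatrices are, so $B''$ is minimally non-totally unimodular. For the converse I invoke the classical theorem of Camion and Truemper: a minimally non-totally unimodular matrix is square with $|\det|=2$. The main point to verify carefully is the bookkeeping in Step 2, namely that resigning to a new pivot row or column acts on the core exactly as a complement operation. I expect this to be the main technical obstacle, and I would verify it entrywise.

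\emph{Step 4 (parity of $m$).} I would use Camion's result that a minimally non-totally unimodular $0,1$ matrix is Eulerian, that is, all its row and column sums are even. Apply this to $B$ and to the matrix obtained by complementing a column $j$ of $B$.
\begin{itemize}
\item Such a column exists with some entry $b_{rj}=1$, since $\det B\neq0$.
\item In row $r$, every entry outside column $j$ flips. If row $r$ had even sum $s$, its new sum is $1+(m-1)-(s-1)=m-s+1$.
\item Both matrices are minimally non-totally unimodular, so both sums are even. Since $s$ is even, $m-s+1$ even forces $m$ to be odd.
\end{itemize}
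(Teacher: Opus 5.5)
Your proof is correct. It differs from the paper mainly in that the paper gives no proof here at all: it imports the statement from \cite[Corollary~3.35 and Theorem~3.26]{Chervet_Grappe_Vallee_2026}. The only piece proved in-house is the direction ``complement of the core $\Rightarrow$ equivalent $H$'', which is Lemma~\ref{lem:core-equivalence}.

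The bookkeeping you flag as the main obstacle is exactly that lemma's computation, and it works out precisely:
\begin{itemize}
\item Resigning the columns of $H(B)$ by row~$i$ and swapping rows $0$ and $i$ gives exactly $H(B_{[i]})$.
\item The old first row becomes row~$i$ of the new core. This is where the ``keeps row~$i$'' part of the complement comes from, and your verbal description in Step~2 glosses over it.
\item A pivot at $(i,j)$ with $i,j\geq1$ yields the core $(B_{[i]})^{[j]}$.
\end{itemize}
For your second bullet, you need $H(B'')$ equivalent to $H(B)$ for every $B''\in\cO(B)$. This follows by iterating the one-step computation along the sequence of complements.

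With this in place, Step~3 is a faithful two-way translation of the definition. It uses only Camion's theorem that a minimally non-totally unimodular matrix has $|\det|=2$. Step~4 is a clean elementary proof that $m$ is odd, via the Eulerian property of $B$ and of a single column complement.

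Your route gives a self-contained argument that rests on nothing beyond Camion's theorem. It also has a by-product: every minor of $H(B)$ is already accounted for by the matrices $(B_{[i]})^{[j]}$. Hence, up to permutations, $\cO(B)$ has at most $(m+1)^2$ members. The citation, in turn, buys brevity and keeps the statement aligned with the general framework of~\cite{Chervet_Grappe_Vallee_2026}.
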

This proposition explains why Theorem~\ref{thm:main} and Corollary~\ref{cor:te-interlace} are equivalent.

We also need the following simple relation between equivalent matrices and the complement orbit; the explicit formulas are computed in~\cite[Lemma~3.32]{Chervet_Grappe_Vallee_2026}.

\begin{lemma}\label{lem:core-equivalence}
    If $B'$ is obtained from $B$ by row or column complements and by row or column permutations, then $H(B')$ is equivalent to $H(B)$.
\end{lemma}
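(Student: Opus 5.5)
We prove Lemma~\ref{lem:core-equivalence} by checking the elementary operations one at a time. Since equivalence is transitive, it suffices to treat a single row or column permutation of $B$ and a single row complement of $B$. Column complements then follow by transposition, since $H(B)^{\sfT}=H(B^{\sfT})$ and a column complement of $B$ is a row complement of $B^{\sfT}$.

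\emph{Permutations.} Let $B'=PBQ$ with permutation matrices $P,Q$. Then
$$H(B')=\begin{bmatrix}1&0\\0&P\end{bmatrix}H(B)\begin{bmatrix}1&0\\0&Q\end{bmatrix},$$
because $P\one=\one$ and $\one^{\sfT}Q=\one^{\sfT}$. So $H(B')$ is a row and column permutation of $H(B)$.

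\emph{Row complement.} This is the real content. Let $B'$ be the row-$i$ complement of $B$ and let $b$ be row $i$ of $B$. For $j\neq i$, row $j$ of $B'$ is $b_j\oplus b$. Over the integers, $b_j\oplus b=b_j+b-2(b_j\circ b)$, where $\circ$ denotes the entrywise product. Passing to $\pm1$ entries makes this clean. The nonfirst rows of $H(B)$ are $h_j=(1,\one^{\sfT}-2b_j)$, which is the $\pm1$ encoding $x\mapsto 1-2x$ of the row $(0,b_j)$. Under this encoding, XOR becomes entrywise multiplication. Hence row $j$ of $H(B')$ is the entrywise product $h_j\circ h_i$ for $j\neq i$. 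The first row of $H(B')$ is $(1,\one^{\sfT})$, and row $i+1$ is unchanged, equal to $h_i$.

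Now multiply column $k$ of $H(B)$ by the $k$-th entry of $h_i$. Since $h_i$ has first entry $1$, the first column is untouched. This turns row $h_0=(1,\one^{\sfT})$ into $h_i$, turns $h_i$ into $(1,\one^{\sfT})$, and turns each $h_j$ into $h_j\circ h_i$. The result has exactly the rows of $H(B')$, but with the first row and row $i+1$ swapped. A final row permutation therefore gives $H(B')$, so $H(B')$ is equivalent to $H(B)$.

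The only delicate point is the row-complement case. The key observation is that the $\pm1$ encoding converts modulo-two addition into entrywise multiplication, so a complement becomes a column resigning followed by a row swap. This agrees with the explicit formulas in~\cite[Lemma~3.32]{Chervet_Grappe_Vallee_2026}. We should also check that the first column of all ones is preserved. It is, because the resigning vector $h_i$ begins with~$1$.
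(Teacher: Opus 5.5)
Your proof is correct and follows the paper's argument. The paper also realizes a row-$i$ complement by swapping the first row of $H(B)$ with row $i+1$ and resigning columns so that the new first row is all ones (you resign before swapping, which is the same operation), handles column complements by transposition, and treats permutations as immediate. Your $\pm1$ encoding, which turns addition modulo two into entrywise multiplication, makes explicit the paper's one-line claim that the new core is the row-$i$ complement.
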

\begin{proof}
    Suppose first that $B'$ is the row-$i$ complement of~$B$.
    In $H(B)$, swap the first row with row~$i+1$.
    Then resign the columns so that the new first row is all ones.
    The new core is the row-$i$ complement of~$B$, up to a row permutation.
    The column case follows by transposing this argument.
    Row and column permutations of the core clearly come from row and column permutations of $H(B)$.
\end{proof}

Since $B$ itself is minimally non-totally unimodular, Camion's theorem~\cite{Camion1965} gives
\begin{equation}\label{eq:camion}
    \det B=\pm2, \qquad B^{-1}\in\left\{\pm\frac12\right\}^{m\times m},
\end{equation}
and every row and every column of~$B$ has a positive even number of ones.
In particular, every cofactor of~$B$ is~$\pm1$.
Also, $m\neq1$, so $m\geq3$.

Over~$\F_2$ we obtain the following facts.
\begin{lemma}\label{lem:mod2-B}
    Over~$\F_2$, we have
    $$
        B\one=0, \quad \one^{\sfT}B=0, \quad \rank B=m-1, \wideand \ker B=\langle\one\rangle.
    $$
\end{lemma}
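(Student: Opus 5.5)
The four claims follow quickly from~\eqref{eq:camion}, so the plan is to reduce $\det B$ and the adjugate modulo~$2$.

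The identities $B\one=0$ and $\one^{\sfT}B=0$ over~$\F_2$ come from the parity statement in~\eqref{eq:camion}. Every row of~$B$ has an even number of ones, so each entry of $B\one$ over the integers is even, hence $0$ modulo~$2$. The same argument applied to columns gives $\one^{\sfT}B=0$. In particular $\one\in\ker B$ over~$\F_2$, so $\rank B\le m-1$.

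For the reverse inequality I will use the cofactors. By~\eqref{eq:camion}, every cofactor of~$B$ is $\pm1$, so every $(m-1)\times(m-1)$ minor of~$B$ is odd. Reducing modulo~$2$, any such minor is a nonzero element of~$\F_2$. Hence $B$ has an invertible $(m-1)\times(m-1)$ submatrix over~$\F_2$, and $\rank B\ge m-1$. This requires $m\ge2$, which holds since $m\ge3$.

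Combining the two bounds gives $\rank B=m-1$. Rank--nullity then makes $\ker B$ one-dimensional, and since it contains $\one\neq0$, we get $\ker B=\langle\one\rangle$.

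The only point needing care is that ``every cofactor is $\pm1$'' must be read as a statement about integer minors before reduction. This follows from $B^{-1}=\operatorname{adj}(B)/\det B$ with $\det B=\pm2$ and $B^{-1}\in\{\pm\tfrac12\}^{m\times m}$: each adjugate entry equals $\det B\cdot(B^{-1})_{ji}=\pm2\cdot(\pm\tfrac12)=\pm1$. Once that is in place, the rest is routine linear algebra over~$\F_2$.
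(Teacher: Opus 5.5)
Your proof is correct and follows the paper's argument: even row and column sums give $B\one=0$ and $\one^{\sfT}B=0$, odd cofactors give $\rank B\ge m-1$ over $\F_2$, and $\one$ then spans the kernel. The only cosmetic difference is that you obtain $\rank B\le m-1$ from $\one\in\ker B$, whereas the paper gets it from $\det B$ being even; both are equally valid.
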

\begin{proof}
    The first two equalities follow from the even row and column sums.
    Since~$\det B$ is even, $B$ is singular over~$\F_2$.
    Every cofactor is odd, so $B$ has rank~$m-1$ over~$\F_2$.
    Since~$B\one=0$, its kernel is spanned by~$\one$.
\end{proof}

\section{The relaxed matroid} \label{sec:not-binary-ternary}

We use the standard notations and definitions from matroid theory; see~\cite{Oxley2011book}.
Put $n=m+1$.
Let $N_B$ be the binary matroid represented by the matrix $[I_n\mid D(B)]$, where $D(B)=\begin{bmatrix} 0 & \one^{\sfT} \\ \one & B\end{bmatrix}$.
Label the identity columns by $r_0,r_1,\ldots,r_m$ and the columns of~$D(B)$ by $c_0,c_1,\ldots,c_m$.
Thus
$$
    c_0=\begin{bmatrix}0\\ \one\end{bmatrix} \wideand c_j=\begin{bmatrix}1\\ B^j\end{bmatrix} \quad(1\leq j\leq m),
$$
where~$B^j$ is column~$j$ of~$B$.

Define $\cJ=\{c_0,r_1,\ldots,r_m\}$ and $\cK=\{r_0,c_1,\ldots,c_m\}$.
These two sets partition the ground set of~$N_B$.

\begin{lemma}\label{lem:two-CH}
    The sets~$\cJ$ and~$\cK$ are circuit-hyperplanes of~$N_B$.
\end{lemma}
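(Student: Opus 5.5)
Since $\cK$ is the complement of $\cJ$, a set $X$ is a hyperplane of $N_B$ exactly when $E\setminus X$ is a cocircuit. So it suffices to show that $\cJ$ and $\cK$ are both circuits and both cocircuits; then each is a circuit whose complement is a cocircuit, i.e.\ a circuit-hyperplane. The matroid $N_B$ has rank $n=m+1$ and $|\cJ|=|\cK|=n$. Being a circuit therefore means: the $n$ vectors are dependent over $\F_2$, and every $n-1$ of them are independent.

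We first treat $\cJ$. Its vectors are $c_0=(0,\one)$ and the unit vectors $e_1,\ldots,e_m$. The dependency is $c_0=\sum_{i\geq1} e_i$. Every element of $\cJ$ occurs in this relation, and $r_1,\ldots,r_m$ are independent. Hence $\cJ$ has rank $m=n-1$ and contains exactly one circuit, which uses all its elements. So $\cJ$ is a circuit.

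For $\cK$ we use $r_0=e_0$ and $c_j=(1,B^j)$. Modulo $e_0$, these span the column space of $B$, which has rank $m-1$ by Lemma~\ref{lem:mod2-B}. Hence $\cK$ has rank $m=n-1$, so it is dependent. By Lemma~\ref{lem:mod2-B} the only nontrivial dependency among the columns $B^j$ is $\sum_j B^j=0$. This gives the relation $\sum_{j=1}^m c_j=(m \bmod 2)\,e_0=e_0$, since $m$ is odd by Proposition~\ref{prop:core}. (Alternatively, $m$ is odd because $\one^{\sfT}B=0$ and every row of $B$ has an even number of ones.) This relation involves all of $\cK$. Since the space of dependencies of $\cK$ has dimension $n-\rank=1$, it is the unique dependency, and so $\cK$ is a circuit.

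For the cocircuit property we use the dual representation $[D(B)^{\sfT}\mid I_n]$, in which the roles of the $r$'s and $c$'s are swapped. Row $0$ of $D(B)^{\sfT}$ corresponds to $c_0$, and $D(B)^{\sfT}=\begin{bmatrix}0&\one^{\sfT}\\ \one & B^{\sfT}\end{bmatrix}$ has the same shape. The matrix $B^{\sfT}$ satisfies the same facts as in Lemma~\ref{lem:mod2-B}. The dual ground set is therefore labelled symmetrically, with $r_i$ and $c_i$ interchanged. Under this relabelling, $\cJ=\{c_0,r_1,\ldots,r_m\}$ plays the role of the set $\{r_0',c_1',\ldots\}$, which is the $\cK$-shape, and $\cK$ plays the role of the $\cJ$-shape. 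The two circuit arguments above therefore show that $\cJ$ and $\cK$ are circuits of $N_B^*$, that is, cocircuits of $N_B$.

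The only delicate point is this label bookkeeping under duality, together with confirming that $m$ is odd so that $\sum_j c_j=e_0$ rather than $0$. Both are routine.
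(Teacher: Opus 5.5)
Your proof is correct. The circuit half matches the paper, including the use of Proposition~\ref{prop:core} to get $c_1+\cdots+c_m=r_0$. The hyperplane half takes a different route.

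\begin{itemize}
\item The paper argues directly with linear functionals. $\cJ$ spans $\{x_0=0\}$, and $\cK$ has rank $n-1$ and lies in $\ker\ell$ with $\ell=x_1+\cdots+x_m$. For closedness one also checks that the remaining elements lie off these hyperplanes, e.g.\ $\ell(r_i)=1$ and $\ell(c_0)=m\equiv1$.
\item You instead note that $D(B)^{\sfT}=D(B^{\sfT})$, so $N_B^*\cong N_{B^{\sfT}}$ via $r_i\leftrightarrow c_i$, and this relabelling swaps $\cJ$ and $\cK$. Rerunning the circuit argument for $B^{\sfT}$ makes $\cJ$ and $\cK$ cocircuits, so their complements are hyperplanes.
\end{itemize}

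The two are essentially the same computation. The functionals $x_0$ and $\ell$ are row~$0$ and the sum of rows $1,\ldots,m$ of $[I_n\mid D(B)]$, whose supports are exactly $\cK$ and $\cJ$. The paper's version is shorter. Yours gets closedness for free and makes explicit the duality symmetry between $\cJ$ and $\cK$.

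You should delete your parenthetical claim that $m$ is odd because $\one^{\sfT}B=0$ and the rows of $B$ have even weight. That implication is false: $C_4$ has all row and column sums equal to $2$. Even minimally non-totally unimodular $0,1$ matrices can have even order, for example
\[
\begin{bmatrix}1&1&0&0\\1&0&1&0\\1&0&0&1\\1&1&1&1\end{bmatrix},
\]
which has determinant $2$ while all its proper minors lie in $\{0,\pm1\}$. Oddness really comes from the complement-orbit hypothesis through Proposition~\ref{prop:core}, as in your main line. It is indispensable here: for even $m$, the set $\{c_1,\ldots,c_m\}$ would itself be a circuit properly contained in $\cK$.
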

\begin{proof}
    We have
    $$
    c_0=r_1+\cdots+r_m.
    $$
    This is a minimal relation, so~$\cJ$ is a circuit.
    Its span is the hyperplane~$x_0=0$, so it is also a hyperplane.
    
    Next, since~$m$ is odd and~$B\one=0$ over~$\F_2$,
    $$
        c_1+\cdots+c_m = \begin{bmatrix} m\\ B\one \end{bmatrix} = \begin{bmatrix} 1\\ 0 \end{bmatrix} =r_0.
    $$
    The columns~$c_1,\ldots,c_m$ are independent.
    Indeed, if~$\sum_j\lambda_jc_j=0$, then~$B\lambda=0$.
    By Lemma~\ref{lem:mod2-B}, either~$\lambda=0$ or~$\lambda=\one$.
    The second case is impossible because the first coordinate of~$\sum_jc_j$ is one.
    Thus~$\cK$ is a circuit.
    
    Finally, let $\ell(x_0,x_1,\ldots,x_m)=x_1+\cdots+x_m$.
    The even column sums of~$B$ give~$\ell(c_j)=0$ for~$j\geq1$, and also~$\ell(r_0)=0$.
    Thus~$\cK$ lies in the hyperplane~$\ker\ell$.
    Since~$\cK$ has rank~$n-1$, it is a hyperplane.
\end{proof}

Let~$M_B$ be obtained from~$N_B$ by relaxing the circuit-hyperplane~$\cJ$.
Thus the bases of~$M_B$ are the bases of~$N_B$, together with the one new basis~$\cJ$.

\begin{lemma}\label{lem:not-binary}
    The matroid~$M_B$ is not binary.
\end{lemma}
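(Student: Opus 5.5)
Since $M_B$ is obtained from a binary matroid by relaxing a circuit-hyperplane, the natural route is to exhibit a $U_{2,4}$-minor. Equivalently, we can show that some circuit and some cocircuit meet in an odd number of elements, or that the symmetric difference of two circuits contains no circuit. The cleanest argument uses the basis-exchange characterization: a matroid is binary if and only if, for every basis $X$ and every circuit $C$, $C$ is the fundamental circuit structure determined mod $2$. We use instead the criterion that in a binary matroid every circuit meets every cocircuit in an even number of elements.

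The new basis is $\cJ$. In $M_B$, every $m$-element subset of $\cJ$ and every set $\cJ\cup\{e\}$ with $e\in\cK$ are now treated as follows. The circuits of $M_B$ are the circuits of $N_B$ other than $\cJ$, together with the sets $\cJ\cup\{e\}$ for $e\in\cK$. The cocircuits are likewise those of $N_B$ other than $\cK=E-\cJ$, together with the sets $\cK\cup\{f\}$ for $f\in\cJ$; this is the standard description of relaxation, dual to relaxing $\cK$ in $N_B^*$.

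Take the circuit $C=\cJ\cup\{r_0\}$ and a cocircuit of $M_B$ that is also a cocircuit of $N_B$. We need one meeting $C$ in an odd number of elements. In $N_B$, a cocircuit $D$ meets the circuit $\cJ$ evenly. Hence $|D\cap C|$ is odd exactly when $r_0\in D$. So it suffices to find a cocircuit $D\ne\cK$ of $N_B$ containing $r_0$. The cocircuit given by the fundamental cocircuit of $r_0$ with respect to the basis $\{r_0,\dots,r_m\}$ works. It is the complement of the hyperplane $x_0=0$, namely $\{r_0,c_1,\dots,c_m\}=\cK$. That is the excluded one, so we need a different choice.

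Instead, pick a cocircuit complementary to a hyperplane defined by a functional $\ell'$ with $\ell'(r_0)=1$ and $\ell'\ne x_0$. For example, take $\ell'=x_0+x_1$. Its nonzero set is a cocircuit of the binary matroid $N_B$, because a minimal nonempty support of the row space is a cocircuit. It contains $r_0$ and $r_1$, so it is not $\cK$, since $r_1\in\cJ$. One must also ensure that this support is minimal, or else decompose it into disjoint cocircuits, one of which contains $r_0$. Any such cocircuit $D$ contains $r_0$, and it differs from $\cK$ as long as $D\neq\cK$. If the cocircuit containing $r_0$ in the decomposition were $\cK$ itself, we would switch to $\ell'=x_0+x_i$ for another $i$.

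A simpler check avoids all of this: the cocircuit $D$ is distinct from $\cK$ because $\cK$ is the support of $x_0$ alone, and the cocircuit supports of $N_B$ correspond bijectively to nonzero functionals. So the only concern is minimality. We handle it by taking $D$ to be any cocircuit contained in the support of $\ell'$ that contains $r_0$; such a $D$ exists because binary supports decompose as disjoint unions of cocircuits. The main obstacle is this bookkeeping: guaranteeing that the chosen $D$ is not $\cK$. Once that is settled, $|C\cap D|$ is odd, so $M_B$ is not binary.
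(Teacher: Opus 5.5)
Your strategy is valid and differs from the paper's. You use circuit--cocircuit parity: your descriptions of the circuits and cocircuits of the relaxation are correct, and for any cocircuit $D\neq\cK$ of $N_B$ with $r_0\in D$, the circuit $\cJ\cup\{r_0\}$ of $M_B$ meets $D$ in $|D\cap\cJ|+1$ elements, which is odd. The gap is the step you yourself flag, $D\neq\cK$, which is never established. Your ``simpler check'' fails on two counts. First, nonzero functionals correspond bijectively to nonempty \emph{cocycles} (disjoint unions of cocircuits), not to cocircuits. Second, at best it shows $\mathrm{supp}(x_0+x_1)\neq\mathrm{supp}(x_0)=\cK$. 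But $D$ is only some cocircuit \emph{contained in} $\mathrm{supp}(x_0+x_1)$. If $\cK\subseteq\mathrm{supp}(x_0+x_1)$, then $\cK$ itself is a legitimate choice of $D$, and nothing you wrote rules this out. The fallback ``switch to $x_0+x_i$'' has the same problem, since you give no reason why some $i$ avoids it.

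The missing input is a fact about $B$, not matroid bookkeeping. We have $(x_0+x_1)(c_j)=1+B_{1j}$, and row~$1$ of $B$ contains a one, since every row has a positive even number of ones by Camion's theorem. So some $c_j\in\cK$ lies outside $\mathrm{supp}(x_0+x_1)\supseteq D$, whence $D\neq\cK$; the same works for every $i$. With this line added, your argument is a complete proof. The paper avoids auxiliary cocircuits altogether. It takes two of the new circuits $\cJ\cup\{x\}$ and $\cJ\cup\{y\}$ with $x,y\in\cK$. Their symmetric difference $\{x,y\}$ is independent, being a proper subset of the circuit $\cK$, which survives the relaxation. That argument needs only Lemma~\ref{lem:two-CH}. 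Yours requires locating a second cocircuit through $r_0$, which needs the extra observation that $B$ has no zero row.
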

\begin{proof}
    The circuit~$\cK$ remains a circuit in~$M_B$, since the relaxation only adds~$\cJ$ as a basis.
    For each~$x\in\cK$, the set~$\cJ\cup\{x\}$ is a circuit of~$M_B$: after the relaxation,~$\cJ$ is a basis, and for every~$y\in\cJ$ the set~$(\cJ-\{y\})\cup\{x\}$ is already a basis of~$N_B$.
    
    Take distinct~$x,y\in\cK$.
    If~$M_B$ were binary, the symmetric difference of the two circuits $\cJ\cup\{x\}$ and $\cJ\cup\{y\}$ would be a disjoint union of circuits.
    Their symmetric difference is~$\{x,y\}$.
    But~$\{x,y\}$ is independent, since it is a proper subset of the circuit~$\cK$.
    This is a contradiction.
\end{proof}

We now state the part of Truemper's almost-representation theorem that we need.
For a~$p\times q$~$0,1$ matrix~$U$, put
\begin{equation}\label{eq:D2D3}
    D_2(U)=
        \begin{bmatrix}
            0&\one_q^{\sfT}\\
            \one_p&U
        \end{bmatrix} \quad\text{over }\F_2, \wideand 
    D_3(U)=
        \begin{bmatrix}
            -1&\one_q^{\sfT}\\
            \one_p&U
        \end{bmatrix} \quad\text{over }\F_3.
\end{equation}

\begin{proposition}[{\cite[Theorem~12.3.8]{Truemper1992book}}]\label{prop:truemper}
    The following statements hold.
    \begin{enumerate}
    \item If~$U$ is complement totally unimodular, then the matroids represented by~$[I_{p+1}\mid D_2(U)]$ over~$\F_2$ and by~$[I_{p+1}\mid D_3(U)]$ over~$\F_3$ have the same bases except for the basis corresponding to the upper-left~$1\times1$ entry of $D_\alpha(U)$, $\alpha=2,3$.
    \item Conversely, suppose a ternary matrix and~$D_2(U)$ have the same nonsingular square submatrices except for the upper-left~$1\times1$ submatrix.
    If there are no zero rows or columns, then~$U$ is complement totally unimodular.
    \end{enumerate}
\end{proposition}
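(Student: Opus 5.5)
The plan is to reduce both parts to a statement about determinants of square submatrices of $D_2(U)$ and $D_3(U)$, and then use complement total unimodularity to control those determinants over $\mathbb{Z}$.

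\textbf{Reduction to submatrices.} For a matrix $[I\mid D]$ over a field, a set $X$ of columns is a basis exactly when the following holds: let $R$ be the set of rows whose identity columns are missing from $X$, and let $C$ be the set of $D$-columns in $X$. Then $|R|=|C|$ and $D[R,C]$ is nonsingular. So part~(1) is equivalent to this claim: for every square submatrix $S$ of $D_2(U)$ other than the upper-left $1\times1$ entry, $S$ is nonsingular over $\F_2$ if and only if the corresponding submatrix $S'$ of $D_3(U)$ is nonsingular over $\F_3$. Part~(2) starts from the same dictionary, read in the reverse direction.

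\textbf{Proof of~(1), by cases on $S$.} If $S$ avoids the entry in position $(0,0)$, then $S$ and $S'$ are the same $0,1$ matrix. It is a submatrix of $[\one\mid U]$ or of $\begin{bmatrix}\one^{\sfT}\\ U\end{bmatrix}$. Both of these are, up to complementing along the bordering line and dropping a unit row or column, members of $\cO(U)$, so they are totally unimodular. Hence $\det S\in\{0,\pm1\}$ over $\mathbb{Z}$, and nonsingularity is the same over $\F_2$ and over $\F_3$.

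Now suppose $S$ contains the corner, so $S=\begin{bmatrix}a&\one^{\sfT}\\ \one&W\end{bmatrix}$ with $W$ a submatrix of $U$. Since the determinant is affine in $a$, over $\mathbb{Z}$ we have
$$\det S = a\det W+\delta,\qquad \delta=\det\begin{bmatrix}0&\one^{\sfT}\\ \one&W\end{bmatrix}=\det(W-\one\one^{\sfT})-\det W.$$
Thus we compare $\delta \bmod 2$ with $(\delta-\det W)\bmod 3$. Both $W$ and $\one\one^{\sfT}-W$ arise, up to sign, from submatrices of matrices in $\cO(U)$: perform a row complement on $\begin{bmatrix}\one^{\sfT}\\ W\end{bmatrix}$, then expand along the bordering row. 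So $\det W,\det(W-\one\one^{\sfT})\in\{0,\pm1\}$. Going through the nine cases, the two conditions agree unless $\det W$ and $\det(W-\one\one^{\sfT})$ are both nonzero. The heart of the proof is to exclude the bad sign pattern in that case. To do this, I would apply a column complement as well, so that $S$ itself, up to sign, becomes a submatrix of a bordered member of $\cO(U)$ of size $|W|+1$. Total unimodularity of that member then bounds $|\delta|\le1$, which forces the compatible signs.

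\textbf{Proof of~(2).} Reverse the argument. The hypothesis says exactly that every real determinant identity used above holds modulo $2$ and modulo $3$ simultaneously. Suppose some member of $\cO(U)$ is not totally unimodular. By Camion's theorem, as in~\eqref{eq:camion}, it then contains a minimally non-totally unimodular submatrix $T$ with $\det T=\pm2$. Undo the complements to carry $T$ back to a bordered submatrix $S$ of $D_2(U)$. Then $S$ is singular over $\F_2$, while the matching ternary submatrix has determinant $\equiv\pm2\not\equiv0$ over $\F_3$. This contradicts the hypothesis. The condition that there are no zero rows or columns is used to guarantee that this transport is defined: every complement of $U$ must be realized by a pivot in $D_2(U)$ on a nonzero entry, and such entries exist only when no row or column is zero.

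\textbf{Expected main obstacle.} I expect the hardest step to be the sign bookkeeping in the corner case. The delicate point is identifying exactly which member of $\cO(U)$ a bordered submatrix of $D_\alpha(U)$ corresponds to after pivoting. My first move would be to verify carefully that a pivot on an entry of the bordering row of $[I\mid D_2(U)]$ realizes a column complement of $U$ over $\F_2$, and that the same pivot in $[I\mid D_3(U)]$ realizes the corresponding signed complement.
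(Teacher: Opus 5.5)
The paper gives no proof of this proposition; it quotes it from Truemper. Your argument therefore has to stand on its own, and in part~(1) it breaks at the decisive step.

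\textbf{Part (1).} Your reduction to submatrices is sound, and so is your first case, the submatrices avoiding the corner. For $[\one\mid W]$, subtracting a row $i$ of $W$ from the others leaves a unit column beside a column-signed submatrix of the row-$i$ complement.

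In the corner case, both claimed bounds are false.
\begin{itemize}
\item The claim $\det(W-\one\one^{\sfT})\in\{0,\pm1\}$ fails for the cyclic permutation matrix $U=W=\one\one^{\sfT}-C_3$. This $U$ is complement totally unimodular: for $3\times3$ matrices, complements preserve whether $U\one=0$ over $\F_2$, so $\cO(U)$ contains no permuted copy of $C_3$, the only non-totally unimodular $3\times3$ $0,1$ matrix. Yet $\det(W-\one\one^{\sfT})=-\det C_3=-2$. Complementing $\begin{bmatrix}\one^{\sfT}\\ W\end{bmatrix}$ along the all-ones row is not an orbit operation on $U$.
\item The bound $|\delta|\le1$ fails already for $U=W=I_2$, where $\delta=\det\begin{bmatrix}0&1&1\\1&1&0\\1&0&1\end{bmatrix}=-2$.
\end{itemize}
Your own case table also shows that when $\det W$ and $\det(W-\one\one^{\sfT})$ are both nonzero, the compatible pattern is $\det W=-\det(W-\one\one^{\sfT})$, that is $|\delta|=2$. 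So a bound $|\delta|\le1$ would force exactly the bad pattern.

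What works is the pivot you mention at the end. It reduces the size to $|W|$ rather than keeping it at $|W|+1$, and no column complement is needed.
\begin{itemize}
\item Choose a row $i$ of $W$ and clear column~$0$ of $S$ with it.
\item Over $\F_2$ this gives $\det S\equiv\pm\det X\pmod 2$, where $X=\begin{bmatrix}\one^{\sfT}\\ (W_k\oplus W_i)_{k\neq i}\end{bmatrix}$.
\item Over $\F_3$ the corner $-1$ turns row~$0$ into $\one^{\sfT}+W_i$. Its $j$th entry is $s_j=(-1)^{W_{ij}}$, because $2\equiv-1$. Also $W_k-W_i=(W_k\oplus W_i)\,\mathrm{diag}(s)$. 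Hence $\det S'\equiv\pm\det X\pmod 3$.
\item $X$ is a submatrix of $\begin{bmatrix}\one^{\sfT}\\ U_{[i]}\end{bmatrix}$. Your first case applied to $U_{[i]}\in\cO(U)$ gives $\det X\in\{0,\pm1\}$, so the two nonsingularity conditions agree.
\end{itemize}

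\textbf{Part (2).} Here the ternary matrix $A$ is arbitrary. The integer identity $\det T=\pm2$ therefore says nothing yet about the corresponding submatrix of $A$. You must first normalize $A$:
\begin{itemize}
\item The $1\times1$ conditions give $A$ the support of $D_2(U)$ away from the corner, with exceptional entry $a=A_{00}\neq0$.
\item Scaling rows and columns by $\pm1$ makes the border entries equal to $1$.
\item Each $2\times2$ submatrix on rows $\{0,i\}$ and columns $\{0,j\}$ is nonsingular over $\F_2$. This forces $A_{ij}=-a$ whenever $U_{ij}=1$.
\item One more rescaling yields $A=D_3(U)$.
\end{itemize}
A border pivot in both matrices produces a pair $(D_2(U'),A')$ satisfying the same hypothesis. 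This is because the exceptional basis is again the one attached to the upper-left entry after the pivot. So the normalization can be repeated for every $U'\in\cO(U)$.

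Only then does a minimally non-totally unimodular submatrix $T$ of $U'$ give the contradiction. $T$ avoids the corner, is singular over $\F_2$, and has determinant $\pm2\not\equiv0$ over $\F_3$.

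Finally, the border entries of $D_2(U)$ are always $1$. So the no-zero-rows-or-columns hypothesis is not what makes these pivots possible.
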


\begin{lemma}\label{lem:not-ternary}
    The matroid~$M_B$ is not ternary.
\end{lemma}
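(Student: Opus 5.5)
The plan is to argue by contradiction, turning a hypothetical ternary representation of $M_B$ into exactly the hypothesis of part~(2) of Proposition~\ref{prop:truemper}, with $U=B$.

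Suppose $M_B$ is ternary. The set $R=\{r_0,r_1,\ldots,r_m\}$ is a basis of $N_B$, since its columns form $I_n$. It is still a basis of $M_B$, because relaxation only adds a basis. So $M_B$ has a representation over $\F_3$ of the form $[I_n\mid A]$, with identity columns labelled $r_0,\ldots,r_m$ and the columns of the $n\times n$ ternary matrix $A$ labelled $c_0,\ldots,c_m$.

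The next step is to compare square submatrices using the standard correspondence. For any standard representation $[I\mid X]$, a set $(R\setminus R')\cup C'$ with $|R'|=|C'|$ is a basis exactly when the submatrix of $X$ with rows $R'$ and columns $C'$ is nonsingular. This holds over any field. The bases of $M_B$ are the bases of $N_B$ together with $\cJ$. The set $\cJ=(R\setminus\{r_0\})\cup\{c_0\}$ corresponds to the upper-left $1\times1$ submatrix, which is the entry $0$ of $D(B)=D_2(B)$. This is singular over $\F_2$, matching the fact that $\cJ$ is not a basis of $N_B$. Hence $A$ and $D_2(B)$ have the same nonsingular square submatrices, except the upper-left $1\times1$ submatrix.

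Next I check the nondegeneracy condition in part~(2). Every row and column of $B$ contains a positive (even) number of ones by~\eqref{eq:camion}. The first row and first column of $D_2(B)$ are $(0,\one^{\sfT})$ and $(0,\one)^{\sfT}$, which are nonzero since $m\ge3$. So $D_2(B)$ has no zero rows or columns.

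Part~(2) of Proposition~\ref{prop:truemper} then says that $B$ is complement totally unimodular. In particular $B\in\cO(B)$ is totally unimodular, since the complement orbit contains $B$ itself. This contradicts the fact that $B$ is minimally non-totally unimodular; for instance, $\det B=\pm2$ by~\eqref{eq:camion}.

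The main point needing care is the matching of hypotheses in Proposition~\ref{prop:truemper}(2). The proposition speaks of a ternary matrix and $D_2(U)$ having the same nonsingular square submatrices, possibly including zero rows or columns. I must make sure that this is precisely the bases-versus-submatrices translation under the fixed labelling by $R$, and that the empty and full submatrices cause no issue. The full submatrix corresponds to the basis $\{c_0,\ldots,c_m\}$ and is treated the same way as every other submatrix. The remaining steps are routine.
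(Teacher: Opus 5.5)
Your proposal is correct and follows essentially the same route as the paper: take a ternary standard representation with respect to the basis $\{r_0,\ldots,r_m\}$, observe that its nonsingular square submatrices agree with those of $D_2(B)$ except at the corner entry corresponding to $\cJ$, and invoke Proposition~\ref{prop:truemper}(2) to get that $B$ is complement totally unimodular, a contradiction. The paper additionally checks that the ternary matrix itself has no zero row or column, via the absence of loops and coloops. In your setup this is immediate: off the corner, the nonzero entries of $A$ and $D_2(B)$ are exactly the nonsingular $1\times1$ submatrices, so they coincide, and $D_2(B)$ has no zero row or column.
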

\begin{proof}
    Assume that~$M_B$ is ternary.
    The set~$\{r_0,\ldots,r_m\}$ is a basis of both~$N_B$ and~$M_B$, so choose a ternary reduced representation~$[I_n\mid R]$ with this basis.
    For a reduced representation, a square submatrix is nonsingular exactly when the corresponding exchange of identity columns gives a basis.
    The bases of~$N_B$ and~$M_B$ differ only in~$\cJ$.
    Therefore the nonsingularity patterns of~$R$ over~$\F_3$ and~$D(B)$ over~$\F_2$ differ only at the upper-left~$1\times1$ submatrix.
    
    The matroid~$M_B$ has no loops or coloops.
    This follows directly from the representation, from the positive row and column sums of~$B$, and from the two bases~$\{r_0,\ldots,r_m\}$ and~$\cJ$.
    Hence~$R$ has no zero row or column.
    The matrix~$D(B)$ also has no zero row or column.
    Proposition~\ref{prop:truemper} now says that~$B$ is complement totally unimodular.
    This is impossible because~$B$ is minimally non-totally unimodular.
    Hence~$M_B$ is not ternary.
\end{proof}

\section{The excluded minor} \label{sec:minors}

The following is a direct consequence of~\cite[Corollary~3.35 and~3.33]{Chervet_Grappe_Vallee_2026}, but we give a simple proof for completeness.

\begin{lemma}\label{lem:proper-CTU}
    Let~$B$ be complement minimally non-totally unimodular.
    If~$B'$ belongs to~$\cO(B)$ and~$U$ is a proper submatrix of~$B'$, then~$U$ is complement totally unimodular.
\end{lemma}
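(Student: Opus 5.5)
We want to show: if $B'\in\cO(B)$ and $U$ is a proper submatrix of $B'$, then every matrix in $\cO(U)$ is totally unimodular. (We take complement totally unimodular to mean this.) The plan is to show that any complement operation applied to $U$ can be realised by a complement operation applied to $B'$, followed by taking the same submatrix. Then the complemented $U$ is a proper submatrix of a member of $\cO(B)$, which is minimally non-totally unimodular by hypothesis. Hence the complemented $U$ is totally unimodular.

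The key step is a compatibility lemma. Let $U=B'[R,C]$ with row set $R$ and column set $C$, and let $i\in R$. Then the row-$i$ complement of $U$ (taken inside $U$) equals the submatrix on $R\times C$ of the row-$i$ complement of $B'$. This holds because the row-$i$ complement of $B'$ keeps row $i$ and replaces each row $k\neq i$ by row $k$ plus row $i$ modulo two. Restricting to the rows in $R$ and columns in $C$ gives exactly the row-$i$ complement of $U$. The column case is symmetric.

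Now argue by induction on the length of a sequence of complement operations taking $U$ to some $U''\in\cO(U)$. At each step the current matrix is $B''[R,C]$ for some $B''\in\cO(B)$. This uses that $\cO(B')=\cO(B)$, because complement operations compose and each one is an involution. The pivot row or column always lies in $R$ or $C$, since it is an index of $U$, so the compatibility lemma applies at every step. At the end, $U''$ is a proper submatrix of some $B''\in\cO(B)$, because the index sets $R$ and $C$ never change. Since $B''$ is minimally non-totally unimodular, $U''$ is totally unimodular. As $U''\in\cO(U)$ was arbitrary, $U$ is complement totally unimodular.

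The only point needing care is an apparent subtlety in the definition of complement. Complementing $U$ within its own rows could differ from complementing $B'$ and restricting, but the explicit formula above shows the two coincide. One must also note that a proper submatrix stays proper, since the index sets are unchanged. I do not expect a serious obstacle.
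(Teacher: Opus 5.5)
Your proof is correct and is essentially the paper's argument: each complement of $U$ is the restriction of the same complement applied to a member of $\cO(B)$, and that member is minimally non-totally unimodular, so its proper submatrix is totally unimodular. Your explicit induction on the length of the complement sequence simply spells out the iteration that the paper leaves implicit.
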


\begin{proof}
    Every row or column complement of~$U$ can be obtained by applying the same complement to~$B'$ and then restricting to the rows and columns of~$U$.
    The resulting larger matrix still belongs to~$\cO(B)$, so it is minimally non-totally unimodular.
    Every proper submatrix of it is totally unimodular.
    Hence every matrix in~$\cO(U)$ is totally unimodular.
\end{proof}

We use the standard behavior of relaxation under deletion and contraction.
If~$e\in\cJ$, then
$$
    M_B\setminus e=N_B\setminus e,
$$
while~$M_B/e$ is obtained from~$N_B/e$ by relaxing~$\cJ-\{e\}$.
If~$e\in\cK$, then
$$
    M_B/e=N_B/e,
$$
while~$M_B\setminus e$ is obtained from~$N_B\setminus e$ by relaxing~$\cJ$.
Thus one of the two one-element minors is always binary.
We show that the other one is ternary.

A direct pivot computation in the reduced binary representation gives the following table.
Here~$B_{[i]}$ is the row-$i$ complement and~$B^{[j]}$ is the column-$j$ complement.

\begin{center}
    \renewcommand{\arraystretch}{1.25}
    \begin{tabular}{c c c}
        \toprule
        Element~$e$ & relaxed one-element minor & core~$U_e$\\
        \midrule
        $r_i$, $1\leq i\leq m$ & $M_B/r_i$ & $B$ with row~$i$ deleted\\
        $c_0$ & $M_B/c_0$ & $B_{[i]}$ with row~$i$ deleted\\
        $c_j$, $1\leq j\leq m$ & $M_B\setminus c_j$ & $B$ with column~$j$ deleted\\
        $r_0$ & $M_B\setminus r_0$ & $B^{[j]}$ with column~$j$ deleted\\
        \bottomrule
    \end{tabular}
\end{center}

In the second row, choose any~$i$ and pivot on the entry in row~$r_i$ and column~$c_0$ before contracting~$c_0$.
In the fourth row, choose any~$j$ and pivot on the entry in row~$r_0$ and column~$c_j$ before deleting~$r_0$.
After a row and column permutation, the resulting binary reduced matrix is~$D_2(U_e)$, and its relaxed circuit-hyperplane is the exceptional basis in Proposition~\ref{prop:truemper}.
The matrices~$U_e$ may be rectangular; this is allowed in~\eqref{eq:D2D3}.

Each~$U_e$ is a proper submatrix of a matrix in~$\cO(B)$.
By Lemma~\ref{lem:proper-CTU}, it is complement totally unimodular.
Therefore~$D_3(U_e)$ gives a ternary representation of the relaxed minor.

\begin{proposition}\label{prop:excluded}
    Every proper minor of~$M_B$ is binary or ternary, but~$M_B$ itself is neither binary nor ternary.
    Hence~$M_B$ is an excluded minor for the class of matroids that are binary or ternary.
\end{proposition}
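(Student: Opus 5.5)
The second assertion, that $M_B$ is neither binary nor ternary, is exactly Lemmas~\ref{lem:not-binary} and~\ref{lem:not-ternary}. So the substance is the first assertion, and the plan is to reduce it to single-element deletions and contractions.

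\emph{Reduction to one-element minors.} The classes of binary matroids and of ternary matroids are each minor-closed. Hence their union is minor-closed. Every proper minor of $M_B$ is a minor of $M_B\setminus e$ or of $M_B/e$ for some element $e$. It therefore suffices to show that, for every element $e$ of the ground set $\cJ\cup\cK$, both $M_B\setminus e$ and $M_B/e$ are binary or ternary.

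\emph{The unrelaxed minor is binary.} By the relaxation rules recorded before the table, one of the two one-element minors equals a minor of $N_B$. If $e\in\cJ$, this is $M_B\setminus e=N_B\setminus e$. If $e\in\cK$, this is $M_B/e=N_B/e$. Since $N_B$ is binary, this minor is binary.

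\emph{The relaxed minor is ternary.} The other one-element minor is obtained from the corresponding binary minor of $N_B$ by relaxing the circuit-hyperplane $\cJ-\{e\}$ (when $e\in\cJ$) or $\cJ$ (when $e\in\cK$). For this case I will invoke the table. After the indicated pivot and a row and column permutation, that binary minor has reduced representation $[I\mid D_2(U_e)]$ over $\F_2$, and the relaxed circuit-hyperplane corresponds to the upper-left $1\times1$ entry. Each $U_e$ is a proper submatrix of some $B'\in\cO(B)$, namely $B$ or a single row or column complement $B_{[i]}$ or $B^{[j]}$. Lemma~\ref{lem:proper-CTU} then makes $U_e$ complement totally unimodular.

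Proposition~\ref{prop:truemper}(1) now applies. The matroid represented by $[I\mid D_3(U_e)]$ over $\F_3$ has exactly the bases of the binary minor, together with or minus the one exceptional basis. That exceptional basis must be present on the ternary side. I will check this directly: the upper-left entry of $D_3(U_e)$ is $-1\neq0$, while the upper-left entry of $D_2(U_e)$ is $0$. Hence the ternary matroid is precisely the relaxation, i.e.\ the relaxed one-element minor, and it is ternary.

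\emph{Main obstacle.} The delicate step is verifying the table. One must check that the pivots really produce $D_2(U_e)$ with the stated cores, and that the relaxed set corresponds to the upper-left entry. A point needing care is that $U_e$ may have a zero row or column, or may be degenerate in size; since Proposition~\ref{prop:truemper}(1) carries no such hypothesis, this causes no difficulty.

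I would verify the table in two representative cases. For $e=r_i$, contracting $r_i$ deletes row $i$ of the reduced matrix, leaving $D_2$ of $B$ with row $i$ removed. For $e=c_0$, pivoting on $(r_i,c_0)$ adds row $i$ to the other rows with a $1$ in column $c_0$, which are rows $1,\dots,m$ but not row $0$. This turns the core into $B_{[i]}$, and the pivot swaps the labels $r_i$ and $c_0$, so contracting $c_0$ deletes the row now labelled $c_0$, namely row $i$. The cases $c_j$ and $r_0$ are dual, with the same argument applied to columns.

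Combining both cases gives the proposition.
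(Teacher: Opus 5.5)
Your proposal is correct and follows the paper's own argument: you reduce to one-element minors, note that one of $M_B\setminus e$ and $M_B/e$ is a minor of the binary matroid $N_B$, and represent the other (relaxed) minor over $\F_3$ by $[I\mid D_3(U_e)]$, using the pivot table, Lemma~\ref{lem:proper-CTU}, and Proposition~\ref{prop:truemper}(1). You also supply details the paper leaves implicit: the check that the upper-left entry is $-1$ in $D_3(U_e)$ but $0$ in $D_2(U_e)$, so the ternary matroid is exactly the relaxation, and the pivot verification of the $r_i$ and $c_0$ rows of the table.
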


\begin{proof}
    Every one-element deletion and contraction is binary or ternary by the argument above.
    The class of matroids that are binary or ternary is minor-closed.
    Since every proper minor is a minor of a one-element minor, every proper minor has the same property.
    The last claim follows from Lemmas~\ref{lem:not-binary} and~\ref{lem:not-ternary}.
\end{proof}

Mayhew, Oporowski, Oxley, and Whittle gave the following list of excluded minors.

\begin{theorem}[{\cite[Theorem~1.1]{Mayhew-Oporowski-Oxley-Whittle2011}}] \label{thm:MOOW}
    The excluded minors for the class of matroids that are binary or ternary are
    \begin{equation}\label{eq:excluded-list}
        \begin{split}
            &U_{2,5},\quad U_{3,5},\quad U_{2,4}\oplus F_7,\quad U_{2,4}\oplus F_7^*,\\
            &U_{2,4}\oplus_2 F_7,\quad U_{2,4}\oplus_2 F_7^*,\quad \AG(3,2)',\quad T_{12}'.
        \end{split}
    \end{equation}
\end{theorem}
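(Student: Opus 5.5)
Since Theorem~\ref{thm:MOOW} is quoted from~\cite{Mayhew-Oporowski-Oxley-Whittle2011}, this is only a sketch of how I would reprove it. Let $M$ be an excluded minor for the class of matroids that are binary or ternary. This class is closed under minors and duality, so the set of excluded minors is closed under duality. Since $M$ is not binary, Tutte's theorem gives a $U_{2,4}$-minor. Since $M$ is not ternary, Reid--Bixby--Seymour gives a minor isomorphic to one of $U_{2,5}$, $U_{3,5}$, $F_7$, $F_7^*$. Both $U_{2,5}$ and $U_{3,5}$ are neither binary nor ternary, and all their proper minors are uniform of rank or corank at most~$2$ on at most four elements. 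So if $M$ has either of them as a minor, $M$ equals it. Otherwise, after dualizing, I may assume that $M$ has a $U_{2,4}$-minor and an $F_7$-minor, and no $U_{2,5}$- or $U_{3,5}$-minor.

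Next I would handle low connectivity. If $M$ is disconnected, write $M=M_1\oplus M_2$. Each $M_i$ is binary or ternary, and $M$ is neither, so one summand is ternary and non-binary and the other is binary and non-ternary. Minimality then forces $M_1\cong U_{2,4}$ and $M_2\in\{F_7,F_7^*\}$; here $U_{2,4}\oplus F_7^*$ arises without dualizing, because the dual of $U_{2,4}\oplus F_7$ is $U_{2,4}\oplus F_7^*$. If $M$ is connected but not $3$-connected, write $M=M_1\oplus_2 M_2$. The same argument applies, since each part is a minor of $M$ and a $2$-sum is binary or ternary exactly when both parts are. It yields $U_{2,4}\oplus_2 F_7$ and its dual $U_{2,4}\oplus_2F_7^*$; I would check that the $2$-sum is unique up to isomorphism by the transitivity of the automorphism groups of $U_{2,4}$ and $F_7$.

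The main case, and the main obstacle, is when $M$ is $3$-connected. Here I would use Seymour's splitter theorem together with the fact that every single-element deletion and contraction of $M$ is binary or ternary. Choose a $3$-connected minor $N$ of $M$ that has both a $U_{2,4}$-minor and an $F_7$-minor, with $N$ as small as possible; a ``$3$-connected minor containing both'' lemma, in the style of Bixby's or Truemper's, should give a bounded list of such $N$, of size about $8$ or $9$. Then the splitter theorem lets me grow from $N$ to $M$ one element at a time while staying $3$-connected. If some proper $3$-connected minor already has both a $U_{2,4}$-minor and an $F_7$-minor, it is neither binary nor ternary, which contradicts minimality. Hence $M$ is at most one step from $N$, which bounds $|E(M)|$. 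The intended structural conclusion is that $M$ is a binary matroid with a single circuit-hyperplane relaxed. I would try to prove this first, by showing that deleting an element of a $U_{2,4}$-witness gives a binary matroid and that the non-binary behaviour of $M$ is concentrated in one circuit-hyperplane.

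Once $M$ is known to be the relaxation of a circuit-hyperplane of a binary $3$-connected matroid of bounded size, the last step is a finite check. One goes through binary matroids with a circuit-hyperplane whose relaxation has every proper minor binary or ternary, and finds exactly $\AG(3,2)'$ and $T_{12}'$. I expect the bounding and structure step in the $3$-connected case to be the genuine difficulty.
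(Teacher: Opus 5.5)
The paper does not prove this theorem. It imports it from Mayhew, Oporowski, Oxley, and Whittle, so your sketch has to stand on its own.

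Your reductions for $U_{2,5}$, $U_{3,5}$ and the disconnected case are correct. In the $2$-sum case, ``the same argument applies'' skips a needed ingredient. Knowing that $M_1$ is ternary and non-binary and $M_2$ is binary and non-ternary gives a $U_{2,4}$-minor of $M_1$ and an $F_7$- or $F_7^*$-minor of $M_2$. To get $U_{2,4}\oplus_2F_7$ or $U_{2,4}\oplus_2F_7^*$ as a minor of $M$, these minors must use the basepoint. For that you need the rooted results: every element of a connected non-binary matroid lies in a $U_{2,4}$-minor (Bixby), and every element of a connected binary non-regular matroid lies in an $F_7$- or $F_7^*$-minor (Seymour). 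This is fillable.

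The genuine gap is the $3$-connected case, which is the whole content of the theorem.
\begin{itemize}
\item \emph{The minimal $N$ is $M$.} Any minor of $M$ with both a $U_{2,4}$- and an $F_7$-minor is neither binary nor ternary, so by excluded-minor minimality it equals $M$. The splitter theorem therefore contributes nothing, and the bound on $|E(M)|$ rests entirely on the unnamed ``bounded list'' lemma.
\item \emph{The bounded-list lemma is the theorem itself.} For an excluded minor, that lemma is just the $3$-connected case of the theorem restated. Your guessed size of about $8$ or $9$ is already wrong, since $T_{12}'$ is a $3$-connected minimal example on $12$ elements.
\item \emph{The structural claim is unproved.} That $M$ is a binary matroid with one circuit-hyperplane relaxed is only announced. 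Your heuristic for it (deleting an element of a $U_{2,4}$-witness gives a binary matroid) fails element by element. Take $\AG(3,2)'$, the relaxation of a circuit-hyperplane $X$ of $\AG(3,2)$. Deleting any $e\notin X$ gives the relaxation of $F_7^*$ at $X$, namely $(F_7^-)^*$, which is ternary but not binary. Yet, by Bixby's theorem, every element of this connected non-binary matroid lies in some $U_{2,4}$-minor, so the heuristic would wrongly predict a binary deletion.
\end{itemize}

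Establishing this structure is where the published proof relies on Truemper's structure results for almost-regular matroids, the circle of ideas behind Proposition~\ref{prop:truemper}. Without that input or a substitute, you have no size bound, so there is no finite check to perform.
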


\begin{proposition}\label{prop:sizes}
    A complement minimally non-totally unimodular matrix has size~$3$ or $5$.
\end{proposition}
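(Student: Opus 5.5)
By Proposition~\ref{prop:excluded}, $M_B$ is one of the eight matroids in~\eqref{eq:excluded-list}. The plan is to compare cheap invariants of $M_B$ with those of each listed matroid. This rules out all but two candidates and ties each survivor to a value of $m$.

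\emph{Size and rank.} The matroid $M_B$ has $2n=2m+2$ elements and rank $n=m+1$, so it has the same rank as its dual. Since $m\geq3$ is odd, we have $|E(M_B)|\geq8$. Checking the list:
\begin{itemize}
\item $U_{2,5}$ and $U_{3,5}$ have only $5$ elements, so they are excluded.
\item $U_{2,4}\oplus F_7$ and $U_{2,4}\oplus F_7^*$ have $11$ elements, which is odd, so they are excluded.
\item $U_{2,4}\oplus_2 F_7$ and $U_{2,4}\oplus_2 F_7^*$ have $9$ elements, again odd, so they are excluded.
\item $\AG(3,2)'$ has $8$ elements and rank $4$, which forces $m=3$.
\item $T_{12}'$ has $12$ elements and rank $6$, which forces $m=5$.
\end{itemize}
Hence $m\in\{3,5\}$. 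If the relaxation conventions in the references make some of these sizes ambiguous, we can also use connectivity: $M_B$ must be $3$-connected, as every excluded minor listed here is, though we will not need this. The parity argument alone already rules out the direct sums and $2$-sums.

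\emph{Consistency with the construction.} We would also record, though it is not needed for the size statement, why both survivors genuinely arise. $\AG(3,2)$ is the binary matroid $[I_4\mid D]$ with $D$ the complement of $I_4$ over $\F_2$, and it has complementary pairs of circuit-hyperplanes. Likewise $T_{12}$ is self-dual with complementary circuit-hyperplanes, matching the construction of $N_B$ from $C_3$ and $C_5$.

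\emph{Main obstacle.} The delicate point is making sure the counts in~\eqref{eq:excluded-list} are correct, namely $|E(U_{2,4}\oplus_2 F_7)|=4+7-2=9$ and $|E(U_{2,4}\oplus F_7)|=11$, and that no other size coincidence can occur. The parity of $|E(M_B)|=2m+2$ does all the work here. The oddness of $m$ from Proposition~\ref{prop:core} is not even required for this step, since $2m+2$ is even for every $m$. That oddness is used, however, in Lemma~\ref{lem:two-CH}, which underlies the construction of $M_B$.
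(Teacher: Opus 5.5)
Your argument is correct and is the same as the paper's: $|E(M_B)|=2m+2$ is even, the first six matroids in the list have an odd number of elements, and matching the remaining sizes $8$ and $12$ gives $m\in\{3,5\}$. One aside, which you flag as unneeded, is false and should be dropped: not every listed excluded minor is $3$-connected, since $U_{2,4}\oplus F_7$ is disconnected and $U_{2,4}\oplus_2 F_7$ has a $2$-separation. This does not affect the proof.
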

\begin{proof}
    Suppose $B$ is complement minimally non-totally unimodular of size $m$.
    The matroid~$M_B$ has rank~$n=m+1$ and
    $$
        |E(M_B)|=2n.
    $$
    By Proposition~\ref{prop:excluded}, it is one of the matroids in~\eqref{eq:excluded-list}.
    The first six matroids in the list have respectively~$5,\ 5,\ 11,\ 11,\ 9$, and~$9$ elements.
    These numbers are odd.
    The last two matroids have~$8$ and~$12$ elements.
    Since~$|E(M_B)|=2n$ is even, we must have $2n\in\{8,12\}$.
    Therefore~$n\in\{4,6\}$, that is $m\in\{3,5\}$.
\end{proof}

The preceding argument also identifies the excluded minor~$M_B$ precisely.

\begin{proposition}\label{prop:identify}
    Let~$B$ be a complement minimally non-totally unimodular matrix of order~$m$.
    If~$m=3$, then
    $$
        M_B\cong \AG(3,2)'.
    $$
    If~$m=5$, then
    $$
        M_B\cong T_{12}'.
    $$
\end{proposition}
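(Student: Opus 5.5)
The plan is to reuse the element count from the proof of Proposition~\ref{prop:sizes}. By Proposition~\ref{prop:excluded}, $M_B$ is an excluded minor for the class of matroids that are binary or ternary. Theorem~\ref{thm:MOOW} then says $M_B$ is isomorphic to one of the eight matroids in~\eqref{eq:excluded-list}. The ground set of $M_B$ is $\cJ\cup\cK$, so it has exactly $2n=2(m+1)$ elements.

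For $m=3$ we have $|E(M_B)|=8$. I would compare this with the element counts recorded in the proof of Proposition~\ref{prop:sizes}: $5,5,11,11,9,9$ for the first six matroids, and $8$ and $12$ for $\AG(3,2)'$ and $T_{12}'$. Exactly one matroid in the list has $8$ elements, namely $\AG(3,2)'$, so $M_B\cong\AG(3,2)'$. For $m=5$ we have $|E(M_B)|=12$, and the only matroid in the list with $12$ elements is $T_{12}'$, so $M_B\cong T_{12}'$.

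As a consistency check, I would also compare ranks. $M_B$ has rank $n$, while $\AG(3,2)'$ has rank $4$ and $T_{12}'$ has rank $6$; these match $n=4$ and $n=6$ respectively. This check is not strictly needed, because the element counts already single out a unique matroid in each case.

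I expect no real obstacle. The only point to verify is that the element counts quoted for the matroids in~\eqref{eq:excluded-list} are correct, in particular that $T_{12}'$ has $12$ elements and $\AG(3,2)'$ has $8$. Both follow from the standard descriptions: $T_{12}'$ is a relaxation of $T_{12}$, and $\AG(3,2)'$ is a relaxation of $\AG(3,2)$, and relaxation does not change the ground set.
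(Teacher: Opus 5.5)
Your proof is correct and matches the paper's argument. Proposition~\ref{prop:excluded} and Theorem~\ref{thm:MOOW} place $M_B$ in the list~\eqref{eq:excluded-list}, and the count $|E(M_B)|=2(m+1)$ then singles out $\AG(3,2)'$ for $m=3$ and $T_{12}'$ for $m=5$; your rank comparison is a valid extra check that the paper omits.
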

\begin{proof}
    By Proposition~\ref{prop:excluded}, $M_B$ is one of the matroids in~\eqref{eq:excluded-list}.
    Since $|E(M_B)|=2(m+1)$, the cases $m=3$ and $m=5$ give respectively $8$ and $12$ elements.
    The list~\eqref{eq:excluded-list} contains exactly one matroid on $8$ elements, namely $\AG(3,2)'$, and exactly one matroid on $12$ elements, namely $T_{12}'$.
    The result follows.
\end{proof}

\section{The classification} \label{sec:proof}

It remains to identify the two small cases.
We first obtain one useful signed equation.
Let
$$
    Q=2B^{-1}\in\{\pm1\}^{m\times m}, \quad u=Q\one, \wideand \sigma=\one^{\sfT}Q\one.
$$
A block inverse calculation gives
$$
    4H(B)^{-1}= \begin{bmatrix}
        4-\sigma&\one^{\sfT}Q\\
        Q\one&-Q
    \end{bmatrix}.
$$
By~\cite[Corollary 3.38]{Chervet_Grappe_Vallee_2026}, $4H(B)^{-1}$ is again a thick te-interlace.
In particular, all its entries are~$\pm1$.
Hence
\begin{equation}\label{eq:u-sigma}
    u\in\{\pm1\}^m \wideand \sigma\in\{3,5\}.
\end{equation}
Since~$BQ=2I$, we also have
\begin{equation}\label{eq:signed-degree}
    Bu=2\one.
\end{equation}

For~$r\geq2$, let~$P_r$ be the permutation matrix of an~$r$-cycle and put $C_r=I_r+P_r$.
The support graph of~$C_r$ is a cycle with~$r$ row vertices and~$r$ column vertices.
Since~$\det(I_r+P_r)=1-(-1)^r$, we have
\begin{equation}\label{eq:cycle-det}
    |\det C_r|= \begin{cases}
        2,&r\text{ odd},\\
        0,&r\text{ even}.
    \end{cases}
\end{equation}

We are now ready to prove Theorem~\ref{thm:main}.

\begin{proof}[Proof of Theorem~\ref{thm:main}]
    By Proposition~\ref{prop:sizes}, we know that $m\in\{3,5\}$.

    First suppose~$m=3$.
    Since~$\sigma$ is the sum of the three entries of~$u$, \eqref{eq:u-sigma} gives~$\sigma=3$ and hence~$u=\one$.
    Equation~\eqref{eq:signed-degree} says that every row of~$B$ has two ones.
    Every column has a positive even number of ones, so every column also has two ones.
    In particular, up to permutations of rows and columns, we have $B=C_3$.
    
    Now suppose~$m=5$.
    If~$\sigma=5$, then again~$u=\one$, and every row of~$B$ has two ones.
    If~$\sigma=3$, then~$u$ has exactly one negative entry.
    After a column permutation, suppose~$u_j=-1$ and~$u_k=1$ for~$k\neq j$.
    If row~$i$ of~$B$ has $w_i$ ones, then~\eqref{eq:signed-degree} gives
    $$
        w_i-2B_{ij}=2.
    $$
    Thus~$w_i=2$ when~$B_{ij}=0$, and~$w_i=4$ when~$B_{ij}=1$.
    Apply the column-$j$ complement.
    Rows with~$B_{ij}=0$ do not change.
    In a row with~$B_{ij}=1$, the other four entries are toggled, so it has now two ones.
    We have therefore found a matrix~$B'\in\cO(B)$ in which every row has two ones.
    
    In both cases, we now have a matrix~$B'\in\cO(B)$ of order~$m$ with every row containing two ones.
    Since~$B'$ is minimally non-totally unimodular, every column has a positive even number of ones.
    The total number of ones is~$2m$, so every column also has two ones.
    The bipartite support graph of~$B'$ is therefore~$2$-regular.
    It is a disjoint union of cycles.
    After row and column permutations,~$B'$ is block diagonal with blocks~$C_{r_1},\ldots,C_{r_t}$.
    By Camion's theorem, $|\det B'|=2$.
    Formula~\eqref{eq:cycle-det} shows that there is exactly one block.
    Hence~$B'$ is row-column permutation equivalent to~$C_m$.
    
    Conversely, a direct check shows that $H(C_3)$ and $H(C_5)$ are thick te-interlaces, so Proposition~\ref{prop:core} gives the converse.
\end{proof}

\begin{proof}[Proof of Corollary~\ref{cor:te-interlace}]
    Let $H$ be a thick te-interlace of size $n$.
    By Proposition~\ref{prop:core}, the core $B$ of $H$ is a complement minimally non-totally unimodular matrix.
    By Theorem~\ref{thm:main}, we know that up to complement operations and permutation of rows and columns, $B$ is equivalent to $C_3$ or $C_5$, which are the cores of $H_4$ or $H_6$, respectively.
    Therefore, by Lemma~\ref{lem:core-equivalence} every thick te-interlace is equivalent to either $H_4$ or $H_6$.
\end{proof}

\begin{proof}[Proof of Corollary~\ref{cor:triangulation}]
    Theorem~4.5 of~\cite{Chervet_Grappe_Vallee_2026} proves this whenever there is no thick te-interlace of size greater than six.
    Any thick te-interlace contains an invertible square submatrix with the same number of rows, and this submatrix is a full-dimensional thick te-interlace; see~\cite[Corollary~3.34]{Chervet_Grappe_Vallee_2026}.
    Theorem~\ref{thm:main} therefore shows that a thick te-interlace has size at most six.
\end{proof}

\subsection*{Acknowledgement}
The second author is grateful to Klaus Truemper for fruitful discussions on complement matrices and for encouraging him to work on this conjecture.
He also thanks an anonymous referee of~\cite{Chervet_Grappe_Vallee_2026} for suggesting a matroid-theoretic approach to this problem and Stefan K\"ober for helpful discussions on minimally non-totally unimodular matrices.

\end{document}